\makeindex \setcounter{tocdepth}{2}
\theoremstyle{plain}
\newtheorem{theorem}{Theorem}[section]
\newtheorem{corollary}[theorem]{Corollary}
\newtheorem{lemma}[theorem]{Lemma}
\theoremstyle{definition}
\newtheorem{remark}[theorem]{Remark}
\def\bA{\mathbb{A}}
\def\bF{\mathbb{F}}
\def\bZ{\mathbb{Z}}
\def\cG{\mathcal{G}}
\def\cP{\mathcal{P}}
\def\cQ{\mathcal{Q}}
\def\cR{\mathcal{R}}
\def\cS{\mathcal{S}}
\def\fc{\mathfrak{c}}
\def\fq{\mathfrak{q}}
\def\deg{\mathbf{deg}}
\def\card{\mathrm{card}}
\begin{document}

\title{The distribution of the Carlitz binomial coefficients modulo a prime}

\author{Dong Quan Ngoc Nguyen}

\dedicatory{Dedicated to the memory of David Goss}

\date{July 24, 2017}

\address{Department of Applied and Computational Mathematics and Statistics \\
         University of Notre Dame \\
         Notre Dame, Indiana 46556, USA }

\email{\href{mailto:dongquan.ngoc.nguyen@nd.edu}{\tt dongquan.ngoc.nguyen@nd.edu}}

\maketitle

\tableofcontents

\begin{abstract}

For a nonnegative integer $n$, and a prime $\wp$ in $\bF_q[T]$, we prove a result that provides a method for computing the number of integers $m$ with $0 \le m \le n$ for which the Carlitz binomial coefficients $\binom{n}{m}_C$ fall into each of the residue classes modulo $\wp$. Our main result can be viewed as a function field analogue of the Garfield--Wilf theorem.

\end{abstract}

\section{Introduction}

Let $p$ be a prime, and let $q = p^s$ for some $s \in \bZ_{>0}$. Let $\bA = \bF_q[T]$ be the ring of polynomials over the finite field $\bF_q$ of $q$ elements, where $T$ denotes an indeterminate. There are many strong analogues between $\bZ$ and $\bA$ (see Goss \cite{Goss}, Rosen \cite{Rosen}, Thakur \cite{Thakur}, and Weil \cite{Weil}). In this paper, we will further investigate more analogous phenomena between $\bZ$ and $\bA$. 

The factorials $n!$ are a basic object in $\bZ$. In the function field setting, Carlitz \cite{Carlitz} discovered an analogue of the usual factorials which we call the Carlitz factorials throughout this paper. The Carlitz factorials are the images of the integers under the mapping $!_C : \bZ_{\ge 0} \to \bA$ (whose definition will be reviewed in Subsection \ref{ss-Lucas-thm}). For each $n \in \bZ_{\ge 0}$, there is a corresponding Carlitz factorial $n!_C$ which is an element in $\bA$. The Carlitz factorial $n!_C$ is defined using the $q$-adic expansion of $n$, and the elements $[i] = T^{q^i} - T \in \bA$.

Using the function field analogue of the usual factorials, Carlitz proved several beautiful analogues between $\bZ$ and $\bA$, among which is a function field analogue of the Staudt--Clausen theorem (see Carlitz \cite{Carlitz}). 

For each $m, n \in \bZ_{\ge 0}$, the binomial coefficient $\binom{n}{m}$ is defined as $\dfrac{n!}{m!(n - m)!}$ if $n \ge m$, and $0$ if otherwise. For a large integer $n$, it is not easy to understand the divisibility properties of the binomial coefficients $\binom{n}{m}$, where $0 \le m \le n$; for example, can one determine whether the binomial coefficients $\binom{n}{m}$ are divisible by a given prime $p$, or fall into certain residue classes modulo $p$? Motivated by this question, Garfield and Wilf \cite{Garfield-Wilf} studied the distribution of the binomial coefficients modulo a prime $p$. For an integer $n \ge 0$, and a prime $p$ in $\bZ$, the main theorem in Garfield and Wilf \cite{Garfield-Wilf} can tell us the number of integers $m$ with $0 \le m \le n$ for which the binomial coefficients $\binom{n}{m}$ fall into each of the residue classes $1, 2, \ldots, p - 1 \pmod{p}$. The main aim of our paper is to prove a function field analogue of the Garfield-Wilf theorem. 

Following the notion of binomial coefficients in $\bZ$, one can define the Carlitz binomial coefficients in the same way as in $\bZ$ by replacing the usual factorials by the Carlitz ones. More explicitly, the Carlitz binomial coefficient $\binom{n}{m}_C$ is defined as $\dfrac{n!_C}{m!_C(n - m)!_C}$ if $n \ge m$, and $0$ if otherwise. Note that $\binom{n}{m}_C$ is an element in $\bA$ for all integers $n, m$.

Our main result in this paper describes the distribution of the Carlitz binomial coefficients $\binom{n}{m}_C$ modulo a prime $\wp$ in $\bA$ (recall that an element in $\bA$ is called a prime if it is an irreducible polynomial over $\bF_q$.) For an integer $n \ge 0$, and a prime $\wp \in \bA$, our main theorem (see Theorem \ref{main-theorem}) provides a method for computing the number of integers $m$ with $0 \le m \le n$ for which the Carlitz binomial coefficients $\binom{n}{m}_C$ fall into each of the residue classes of $(\bA/\wp\bA)^{\times}$. In order to establish the main result of this paper, we will adapt the proof of the Garfield--Wilf theorem into the function field setting. 

The content of the paper is organized as follows. In Subsection \ref{ss-Lucas-thm}, we recall a function field analogue of the Lucas theorem that is due to Thakur \cite{thakur}. We will need this function field analogue in the proof of one of our main lemmas. In Subsection \ref{subsection-semigroup}, we describe the semigroups that will play a key role in the proof of our main result. In Subsection \ref{subsection-the-distribution-of-the-C-bc}, we state and prove the main result in this paper.

\section{The distribution of the Carlitz binomial coefficients modulo $\wp$}

The aim of this section is to prove our main result about the distribution of the Carlitz binomial coefficients modulo a prime in $\bA$. We begin with Subsection \ref{ss-Lucas-thm} whose main purpose is to recall a function field analogue of the Lucas theorem that is discovered by Thakur \cite{thakur}.

\subsection{A function field analogue of Lucas' theorem}
\label{ss-Lucas-thm}

We begin this subsection by recalling the notion of the Carlitz binomial coefficients that was introduced by Carlitz \cite{Carlitz} (see also Goss \cite{Goss}, and Thakur \cite{Thakur}). 

For each $n \in \bZ_{\ge 0}$, let 
\begin{align*}
n = n_0 + n_1q + \cdots + n_rq^r
\end{align*}
be the $q$-adic expansion of $n$, where $r \in \bZ_{\ge 0}$, and the $n_i$ are integers such that $0 \le n_i < q$. The $n$-th Carlitz factorial, denoted by $n!_C$, is defined by
\begin{align}
\label{C-factorial}
n!_C = \prod_{i = 0}^r D_i^{n_i} \in \bA,
\end{align}
where $D_i = \prod_{r = 0}^{i - 1} (T^{q^i} - T^{q^r})$ for each $i \ge 1$, and $D_0 = 1$. Note that the above equation implies that $0!_C = 1$.

For $\alpha, \beta \in \bZ_{\ge 0}$, the Carlitz binomial coefficient is defined by
\begin{align}
\label{C-binomial}
\binom{\alpha}{\beta}_C = 
\begin{cases}
\dfrac{\alpha!_C}{\beta!_C(\alpha - \beta)!_C} \; \; &\text{if $\alpha \ge \beta$,} \\
0 \; \; &\text{otherwise}.
\end{cases}
\end{align}

We will need the following function field analogue of Lucas' theorem that is due to Thakur \cite{thakur}.

\begin{theorem}
\label{t-thakur}
$(\text{Thakur \cite[Theorem 3.1]{thakur}})$

Let $\wp$ be a prime of degree $h$ in $\bA$. Let $\alpha, \beta \in \bZ_{\ge 0}$. Let $\alpha = \sum \alpha_i q^{hi}$ and $\beta = \sum \beta_i q^{hi}$ be the $q^h$-adic expansions of $\alpha$ and $\beta$, respectively, where $0 \le \alpha_i, \beta_i < q^h$. Then
\begin{align*}
\binom{\alpha}{\beta}_C \equiv \prod \binom{\alpha_i}{\beta_i}_C \pmod{\wp}.
\end{align*}

\end{theorem}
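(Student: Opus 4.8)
The plan is to reduce the Carlitz version of Lucas' theorem to a statement about the $q^h$-adic digits of the factorials, exploiting the multiplicative definition \eqref{C-binomial} and the explicit product formula \eqref{C-factorial}. The starting observation is that $n!_C$ is governed by the $q$-adic expansion of $n$, and that the $q^h$-adic digits of $n$ are obtained by grouping the $q$-adic digits into blocks of length $h$: if $\alpha_i = \sum_{j=0}^{h-1} a_{hi+j} q^j$ with $0 \le a_k < q$ is the $q$-adic expansion of the $i$-th $q^h$-adic digit, then $a_{hi+j}$ is precisely the $(hi+j)$-th $q$-adic digit of $\alpha$. So the first step is to rewrite $\alpha!_C = \prod_k D_k^{a_k}$, regroup the product over the blocks, and record a clean formula for $\binom{\alpha_i}{\beta_i}_C$ purely in terms of the $D_k$ with $hi \le k < h(i+1)$ — here one must be careful that $\binom{\alpha_i}{\beta_i}_C$ uses the $q$-adic expansion of $\alpha_i - \beta_i$, which \emph{is not} simply the digitwise difference $a_{hi+j} - b_{hi+j}$ when borrowing occurs; this is exactly the phenomenon that makes the analogue of Kummer's theorem nontrivial.

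**The key reduction** is the following: since $\binom{\alpha}{\beta}_C = 0$ unless $\alpha \ge \beta$, and since the right-hand side vanishes mod $\wp$ as soon as some $\binom{\alpha_i}{\beta_i}_C \equiv 0$, the heart of the matter is the behavior of $D_k \bmod \wp$. The crucial arithmetic fact is that $\deg D_k = k q^k$, hence for $k \ge h$ the polynomial $D_k$ has degree $\ge h$, and more importantly $[k] = T^{q^k} - T$ and the factors $T^{q^k} - T^{q^r}$ behave periodically modulo $\wp$ with period $h$ in the exponent, because $\wp \mid T^{q^h} - T$ forces $T^{q^{k+h}} \equiv T^{q^k} \pmod{\wp}$. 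Thus I would prove, as the central lemma, a congruence expressing $D_k \bmod \wp$ in terms of $D_{k \bmod h}$ (up to known unit factors and powers coming from the "overflow" blocks) — this is precisely where Thakur's argument lives, and where one invokes that $(\bA/\wp)^\times$ is cyclic of order $q^h - 1$ so that the exponents of the $D_k$ only matter modulo $q^h - 1$.

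**Assembling the proof:** with the periodicity lemma for $D_k \bmod \wp$ in hand, one substitutes into $\alpha!_C / (\beta!_C (\alpha-\beta)!_C) \bmod \wp$, collects the contributions block by block, and checks that the $i$-th block contributes exactly $\binom{\alpha_i}{\beta_i}_C \bmod \wp$ — including the carrying corrections, which must cancel telescopically across adjacent blocks precisely because in the integer subtraction $\alpha - \beta$ a borrow out of block $i$ is a carry into block $i+1$. One should also dispatch the degenerate case $\alpha < \beta$ (and the cases where individual block subtractions require borrows that make $\binom{\alpha_i}{\beta_i}_C$ vanish) so that the zero on each side matches.

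**The main obstacle** I anticipate is bookkeeping the borrows in the $q^h$-adic subtraction $\alpha - \beta$ against the digitwise structure of the Carlitz factorial: the formula \eqref{C-factorial} is defined via the $q$-adic expansion of its argument, so $(\alpha - \beta)!_C$ is \emph{not} a term-by-term function of the $\alpha_i - \beta_i$, and one has to show that the discrepancy is absorbed by the unit factors in the $D_k \bmod \wp$ congruence. Since Thakur's theorem \ref{t-thakur} is cited as known, the cleanest route is in fact to \emph{invoke it directly} rather than reprove it — but if a self-contained argument is wanted, this carry analysis is the technical core, and I would isolate it as a separate sublemma about how $n!_C \bmod \wp$ depends only on the $q^h$-adic digits of $n$ together with a correction term that is multiplicative in the carries.
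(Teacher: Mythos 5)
The paper does not prove this statement at all: Theorem \ref{t-thakur} is quoted verbatim from Thakur \cite[Theorem 3.1]{thakur} and used as a black box, so your closing suggestion to ``invoke it directly'' is exactly what the paper does, and for the purposes of this paper that citation is the whole argument.

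As a self-contained proof, however, your sketch has a genuine gap at what you yourself designate as the central lemma. You propose ``a congruence expressing $D_k \bmod \wp$ in terms of $D_{k \bmod h}$ (up to known unit factors).'' No such congruence with unit factors can exist: for every $k \ge h$ the product $D_k = \prod_{r=0}^{k-1}\bigl(T^{q^k} - T^{q^r}\bigr)$ contains the factor with $r = k-h$, namely $T^{q^k} - T^{q^{k-h}} = \bigl(T^{q^h} - T\bigr)^{q^{k-h}} = [h]^{q^{k-h}}$, and $\wp \mid [h]$ because $T^{q^h}-T$ is the product of all monic irreducibles of degree dividing $h$. Hence $\wp^{q^{k-h}} \mid D_k$ and $D_k \equiv 0 \pmod{\wp}$ for all $k \ge h$, so the lemma as you state it is vacuous (or false, if the correction factors are required to be units). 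The correct version must split $D_k$ into its exact $\wp$-adic valuation and its unit part, prove the period-$h$ statement for the unit part only, and then verify that in the ratio $\alpha!_C/\bigl(\beta!_C\,(\alpha-\beta)!_C\bigr)$ the valuations cancel exactly when no $q^h$-adic borrowing occurs and leave a strictly positive excess otherwise, matching the vanishing of some $\binom{\alpha_i}{\beta_i}_C$ on the right-hand side. This is the Kummer-type carry analysis you correctly flag as ``the technical core,'' but it is not carried out, and without the valuation bookkeeping and the explicit unit part of $D_k$ modulo $\wp$ the argument does not close.
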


\subsection{The semigroup of all finite words}
\label{subsection-semigroup}

Throughout this subsection, fix an integer $h \in \bZ_{> 0}$. Let $S_h$ be the set defined by 
\begin{align*}
S_h = \{0, 1, 2, \ldots, q^h - 1\},
\end{align*}
and let $S_h^{\omega}$ be the set given by
\begin{align*}
S_h^{\omega} = \bigcup_{i = 1}^{\infty} S_h^i,
\end{align*}
where for each $i \ge 1$,
\begin{align*}
S_h^i = \prod_{j = 1}^{j = i} S_h = \underbrace{S_h \times \cdots \times S_h}_{\text{$i$ copies of $S_h$}}.
\end{align*}

An element $\bar{\alpha}$ belongs to $S_h^{\omega}$ if and only if there exists a unique integer $i \ge 1$ such that $\bar{\alpha} \in S_h^i$, and $\bar{\alpha}$ can be uniquely written in the form 
\begin{align*}
\bar{\alpha} = (\alpha_0, \alpha_1, \ldots, \alpha_{i - 1})
\end{align*}
for some $\alpha_0, \ldots, \alpha_{i - 1} \in S_h$. 
The unique integer $i \ge 1$ for which $\bar{\alpha} \in S_h^i$ is called the \textit{degree of $\bar{\alpha}$}. Equivalently, the degree of $\bar{\alpha}$ is the number of components of $\bar{\alpha}$. In notation, we denote by $\deg(\bar{\alpha})$ the degree of $\bar{\alpha}$.

We define a binary operation $\star : S_h^{\omega} \times S_h^{\omega} \to S_h^{\omega}$ as follows. For each $\bar{\alpha} = (\alpha_0, \ldots, \alpha_r), \bar{\beta} = (\beta_0, \ldots, \beta_s) \in S_h^{\omega}$ for some $r, s \ge 0$, we define $\bar{\alpha} \star \bar{\beta}$ to be the element $(\gamma_0, \ldots, \gamma_{r + s + 1}) \in S_h^{r + s + 2} \subset S_h^{\omega}$, where
\begin{align*}
\gamma_i =
\begin{cases}
\alpha_i \; \; &\text{if $0 \le i \le r$,} \\
\beta_{i - (r + 1)}  \; \; &\text{if $r + 1 \le i \le r + s + 1$.}
\end{cases}
\end{align*}
The set $S_h^{\omega}$ equipped with the binary operation ``$\star$'' is a semigroup. 

\begin{remark}

One can view $S_h$ is an alphabet, and $S_h^{\omega}$ is the set of all finite words over the alphabet $S_h$. The binary operation ``$\star$'' can be seen as concatenation on $S_h^{\omega}$. 

\end{remark}

To each element $\bar{\alpha} = (\alpha_0, \ldots, \alpha_r) \in S_h^{\omega}$, one associates an integer $z_{\bar{\alpha}}(h) \in \bZ_{\ge 0}$ by setting
\begin{align}
\label{z-from-S-h-omega}
z_{\bar{\alpha}}(h) = \sum_{i = 0}^{r} \alpha_i q^{hi}.
\end{align}
Note that $z_{\bar{\alpha}}(h)$ is the $q^h$-adic expansion of the components of $\bar{\alpha}$. 

From (\ref{z-from-S-h-omega}), it is not difficult to see that
\begin{align}
\label{z-alpha*beta}
z_{\bar{\alpha} \star \bar{\beta}}(h) = z_{\bar{\alpha}}(h) + q^{h(\deg(\bar{\alpha}))}z_{\bar{\beta}}(h).
\end{align}

Now let $u \in \bZ_{\ge 0}$, and let $u = \sum_{i = 0}^{\infty} u_iq^{hi}$ be the $q^h$-adic expansion of $u$, where the integers $u_i$ satisfy $0 \le u_i < q^h$, and all but finitely many $u_i$ are zero. 

For each integer $s \in \bZ_{\ge 0}$, we define an integer $u_{(s)} \in \bZ_{\ge 0}$ associated to $u$ by 
\begin{align}
\label{u-s}
u_{(s)}= \sum_{i = 0}^{\infty}  u_{s + i}q^{hi} \in \bZ_{\ge 0}.
\end{align}

For each pair $(r, s) \in \bZ_{\ge 0} \times \bZ_{\ge 0}$, we define an integer $u_{(r, s)} \in \bZ_{\ge 0}$ associated to $u$ by 
\begin{align}
\label{u-r-s}
u_{(r, s)} = \sum_{i = 0}^r u_{s + i}q^{hi} \in \bZ_{\ge 0}.
\end{align}

Note that $u_{(s)}$ and $u_{(r, s)}$ are uniquely determined by $u$, and equations (\ref{u-s}) and (\ref{u-r-s}) are the $q^h$-adic expansions of $u_{(s)}$ and $u_{(r, s)}$, respectively. The next result is straightforward from (\ref{u-s}) and (\ref{u-r-s}).

\begin{lemma}
\label{l1}

Let $u \in \bZ_{\ge 0}$. Then
\begin{align*}
u = u_{(r, 0)} + q^{h(r + 1)}u_{(r + 1)}
\end{align*}
for any integer $r \ge 0$.

\end{lemma}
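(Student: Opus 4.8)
The plan is to unwind the definitions and verify the claimed decomposition by comparing $q^h$-adic expansions term by term. Write $u = \sum_{i=0}^\infty u_i q^{hi}$ for the $q^h$-adic expansion of $u$, with $0 \le u_i < q^h$ and only finitely many $u_i$ nonzero. The right-hand side of the asserted identity is $u_{(r,0)} + q^{h(r+1)} u_{(r+1)}$, so the natural approach is to substitute the defining formulas (\ref{u-r-s}) for $u_{(r,0)}$ (taking $s = 0$) and (\ref{u-s}) for $u_{(r+1)}$ (taking $s = r+1$) and show that the result collapses to $\sum_{i=0}^\infty u_i q^{hi} = u$.

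Concretely, I would first write
\begin{align*}
u_{(r,0)} = \sum_{i=0}^r u_i q^{hi},
\end{align*}
which is immediate from (\ref{u-r-s}) with $s = 0$, so that this accounts for exactly the ``low'' part of the expansion of $u$, namely the terms with exponent $0, h, 2h, \ldots, rh$. Next, from (\ref{u-s}) with $s = r+1$ I would write
\begin{align*}
u_{(r+1)} = \sum_{i=0}^\infty u_{r+1+i} q^{hi},
\end{align*}
and hence
\begin{align*}
q^{h(r+1)} u_{(r+1)} = \sum_{i=0}^\infty u_{r+1+i} q^{h(r+1+i)} = \sum_{j=r+1}^\infty u_j q^{hj},
\end{align*}
after the index shift $j = r+1+i$. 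This is precisely the ``high'' part of the expansion of $u$, namely all the terms with exponent at least $h(r+1)$.

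Adding the two pieces then gives $u_{(r,0)} + q^{h(r+1)} u_{(r+1)} = \sum_{i=0}^r u_i q^{hi} + \sum_{j=r+1}^\infty u_j q^{hj} = \sum_{i=0}^\infty u_i q^{hi} = u$, which is the claimed equality. There is essentially no obstacle here: the only thing to be careful about is that the two index ranges $\{0,1,\ldots,r\}$ and $\{r+1, r+2, \ldots\}$ partition $\bZ_{\ge 0}$ with no overlap and no gap, so that the sum of the two sub-series is exactly the full series for $u$; this is what makes the decomposition work, and it holds for every $r \ge 0$. Since all but finitely many $u_i$ vanish, all sums involved are finite, so there are no convergence subtleties.
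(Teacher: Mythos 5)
Your proof is correct and is exactly the routine verification the paper has in mind: the paper simply asserts the lemma is ``straightforward from (\ref{u-s}) and (\ref{u-r-s})'', and your argument spells out that verification by splitting the $q^h$-adic expansion of $u$ into the low part $\sum_{i=0}^r u_i q^{hi}$ and the shifted high part $\sum_{j=r+1}^\infty u_j q^{hj}$.
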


\begin{lemma}
\label{l2}

Let $\wp$ be a prime of degree $h \ge 1$ in $\bA$. Let $\bar{\alpha}, \bar{\beta} \in S_h^{\omega}$, and let $u \in \bZ_{\ge 0}$. We maintain the notation as above. Then
\begin{align*}
\binom{z_{\bar{\alpha} \star \bar{\beta}}(h)}{u}_C \equiv \binom{z_{\bar{\alpha}}(h)}{u_{(\deg(\bar{\alpha})-1, 0)}}_C \binom{z_{\bar{\beta}}(h)}{u_{(\deg(\bar{\alpha}))}}_C \pmod{\wp},
\end{align*}
where $u_{(\deg(\bar{\alpha}))}, u_{(\deg(\bar{\alpha}) - 1, 0)} \in \bZ_{\ge 0}$ are defined by (\ref{u-s}) and (\ref{u-r-s}), respectively.

\end{lemma}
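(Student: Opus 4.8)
The plan is to reduce the whole statement to a single application of Thakur's function field Lucas theorem (Theorem \ref{t-thakur}), followed by a regrouping of the resulting product of digit-wise Carlitz binomial coefficients. Write $r + 1 = \deg(\bar{\alpha})$ and $s + 1 = \deg(\bar{\beta})$, so that $\bar{\alpha} = (\alpha_0, \ldots, \alpha_r)$ and $\bar{\beta} = (\beta_0, \ldots, \beta_s)$, and let $u = \sum_{i \ge 0} u_i q^{hi}$ be the $q^h$-adic expansion of $u$. By the definition of ``$\star$'' together with (\ref{z-alpha*beta}), the $q^h$-adic expansion of $z_{\bar{\alpha} \star \bar{\beta}}(h)$ has $i$-th digit $\alpha_i$ for $0 \le i \le r$, $i$-th digit $\beta_{i - r - 1}$ for $r + 1 \le i \le r + s + 1$, and $i$-th digit $0$ for $i > r + s + 1$; since $0 \le \alpha_i, \beta_j < q^h$ these are bona fide $q^h$-adic digits, which is precisely the hypothesis required to invoke Theorem \ref{t-thakur}.

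First I would apply Theorem \ref{t-thakur} to $\binom{z_{\bar{\alpha} \star \bar{\beta}}(h)}{u}_C$, obtaining, modulo $\wp$,
$$\binom{z_{\bar{\alpha} \star \bar{\beta}}(h)}{u}_C \equiv \prod_{i = 0}^{r} \binom{\alpha_i}{u_i}_C \cdot \prod_{i = r + 1}^{r + s + 1} \binom{\beta_{i - r - 1}}{u_i}_C \cdot \prod_{i > r + s + 1} \binom{0}{u_i}_C,$$
with all but finitely many factors equal to $1$. Then I would apply Theorem \ref{t-thakur} once more to each of the two binomial coefficients on the right-hand side of the lemma. By (\ref{u-r-s}), the $q^h$-adic digits of $u_{(r, 0)} = u_{(\deg(\bar{\alpha}) - 1, 0)}$ are precisely $u_0, \ldots, u_r$, so Theorem \ref{t-thakur} yields $\binom{z_{\bar{\alpha}}(h)}{u_{(r, 0)}}_C \equiv \prod_{i = 0}^{r} \binom{\alpha_i}{u_i}_C \pmod{\wp}$; and by (\ref{u-s}), the $q^h$-adic digits of $u_{(r + 1)} = u_{(\deg(\bar{\alpha}))}$ are $u_{r + 1}, u_{r + 2}, \ldots$, so after the re-indexing $j = i - r - 1$ Theorem \ref{t-thakur} yields $\binom{z_{\bar{\beta}}(h)}{u_{(r + 1)}}_C \equiv \prod_{j = 0}^{s} \binom{\beta_j}{u_{j + r + 1}}_C \cdot \prod_{j > s} \binom{0}{u_{j + r + 1}}_C \pmod{\wp}$.

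To conclude, I would match the three blocks: the first blocks coincide verbatim; the second blocks coincide under the substitution $i = j + r + 1$; and the tail block $\prod_{i > r + s + 1} \binom{0}{u_i}_C$ coincides with $\prod_{j > s} \binom{0}{u_{j + r + 1}}_C$ under the same substitution. Multiplying the two latter congruences and comparing with the expansion of $\binom{z_{\bar{\alpha} \star \bar{\beta}}(h)}{u}_C$ then gives the lemma. The only delicate point is the bookkeeping of digit positions across the three ranges — in particular, checking that the tail factors $\binom{0}{u_i}_C$ (each of which vanishes precisely when the corresponding digit $u_i$ is nonzero) appear identically on both sides, so that the congruence holds for every $u \in \bZ_{\ge 0}$ with no size restriction on $u$. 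The decomposition $u = u_{(\deg(\bar{\alpha}) - 1, 0)} + q^{h \deg(\bar{\alpha})} u_{(\deg(\bar{\alpha}))}$ supplied by Lemma \ref{l1} (with $r = \deg(\bar{\alpha}) - 1$) is the $u$-side analogue of (\ref{z-alpha*beta}) and makes this matching transparent.
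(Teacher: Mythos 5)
Your proposal is correct and follows essentially the same route as the paper's own proof: apply Thakur's Lucas-type theorem (Theorem \ref{t-thakur}) to $\binom{z_{\bar{\alpha}\star\bar{\beta}}(h)}{u}_C$ and to each factor on the right, then match the digit-wise products of $\binom{\alpha_i}{u_i}_C$ and $\binom{\beta_j}{u_{j+r+1}}_C$ blockwise, using the decompositions (\ref{u-s}), (\ref{u-r-s}) of $u$. Your handling of the tail factors $\binom{0}{u_i}_C$ is a slightly more explicit version of the paper's infinite products with vanishing higher digits, so no substantive difference.
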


\begin{proof}

One can write $\bar{\alpha} = (\alpha_0, \ldots, \alpha_r)$, $\bar{\beta} = (\alpha_{r + 1}, \ldots, \alpha_{r + s})$, where $r = \deg(\bar{\alpha}) - 1 \ge 0$, $s = \deg(\bar{\beta})$, and the $\alpha_i$ are in $S_h$. From (\ref{z-from-S-h-omega}), one can write $z_{\bar{\alpha} \star \bar{\beta}}(h)$ in the form
\begin{align}
\label{eq1-l2}
z_{\bar{\alpha} \star \bar{\beta}}(h) = \sum_{i = 0}^{\infty} \alpha_i q^{hi},
\end{align}
where we let $\alpha_i = 0$ for all $i > r + s$. Note that (\ref{eq1-l2}) is the $q^h$-adic expansion of $z_{\bar{\alpha} \star \bar{\beta}}(h)$.

Let $u = \sum_{i = 0}^{\infty} \gamma_iq^{hi}$ be the $q^h$-adic expansion of $u$, where the integers $\gamma_i$ satisfy $0 \le \gamma_i < q^h$, and all but finitely many $\gamma_i$ are zero. From (\ref{eq1-l2}) and Theorem \ref{t-thakur}, we deduce that
\begin{align}
\label{eq2-l2}
\binom{z_{\bar{\alpha} \star \bar{\beta}}(h)}{u}_C \equiv \prod_{i = 0}^{\infty}\binom{\alpha_i}{\gamma_i}_C = \left( \prod_{i = 0}^r\binom{\alpha_i}{\gamma_i}_C\right) \left( \prod_{j = r + 1}^{\infty}\binom{\alpha_j}{\gamma_j}_C\right)\pmod{\wp}.
\end{align}

By (\ref{u-r-s}) and (\ref{z-from-S-h-omega}), one sees that
\begin{align*}
u_{(\deg(\bar{\alpha}) - 1, 0)} = u_{(r, 0)} = \sum_{i = 0}^r\gamma_iq^{hi},
\end{align*}
and
\begin{align*}
z_{\bar{\alpha}}(h) = \sum_{i = 0}^r\alpha_iq^{hi}.
\end{align*}
Hence it follows from Theorem \ref{t-thakur} that
\begin{align}
\label{eq3-l2}
\binom{z_{\bar{\alpha}}(h)}{u_{(\deg(\bar{\alpha})-1, 0)}}_C \equiv  \prod_{i = 0}^r\binom{\alpha_i}{\gamma_i}_C \pmod{\wp}.
\end{align}

Using (\ref{u-s}), one sees that
\begin{align*}
u_{(\deg(\bar{\alpha}))} = u_{(r + 1)} = \sum_{j = 0}^{\infty} \gamma_{r + 1 + j}q^{hj}.
\end{align*}
Since $\bar{\beta} = (\alpha_{r + 1}, \ldots, \alpha_{r + s})$, and $\alpha_j = 0$ for all $j > r + s$, one can write $z_{\bar{\beta}}(h)$ in the form
\begin{align*}
z_{\bar{\beta}}(h) = \sum_{j = 0}^{\infty}\alpha_{r + 1+ j}q^{hj}.
\end{align*}
Hence Theorem \ref{t-thakur} implies that
\begin{align}
\label{eq4-l2}
\binom{z_{\bar{\beta}}(h)}{u_{(\deg(\bar{\alpha})-1, 0)}}_C \equiv  \prod_{j = 0}^{\infty}\binom{\alpha_{r + 1+ j}}{\gamma_{r + 1 + j}}_C = \prod_{j = r + 1}^{\infty}\binom{\alpha_{j}}{\gamma_{j}}_C \pmod{\wp}.
\end{align}

It follows from (\ref{eq2-l2}), (\ref{eq3-l2}), (\ref{eq4-l2}) that 
\begin{align*}
\binom{z_{\bar{\alpha} \star \bar{\beta}}(h)}{u}_C \equiv \binom{z_{\bar{\alpha}}(h)}{u_{(\deg(\bar{\alpha})-1, 0)}}_C \binom{z_{\bar{\beta}}(h)}{u_{(\deg(\bar{\alpha}))}}_C \pmod{\wp},
\end{align*}
which proves our contention.

\end{proof}

The next result is a special case of Lemma \ref{l2}.

\begin{corollary}
\label{c1}

Let $\wp$ be a prime of degree $h \ge 1$ in $\bA$. Let $\bar{\alpha}, \bar{\beta} \in S_h^{\omega}$, and let $u, v$ be integers such that $0 \le u \le z_{\bar{\alpha}}(h)$ and $0 \le v \le z_{\bar{\beta}}(h)$. Then
\begin{align*}
\binom{ z_{\bar{\alpha}\star \bar{\beta}}(h)}{u + q^{h\deg(\bar{\alpha})}v}_C \equiv \binom{z_{\bar{\alpha}}(h)}{u}_C \binom{z_{\bar{\beta}}(h)}{v}_C \pmod{\wp}.
\end{align*}

\end{corollary}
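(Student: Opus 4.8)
The plan is to derive Corollary \ref{c1} directly from Lemma \ref{l2} by specializing the integer $u$ appearing in the lemma to $u + q^{h\deg(\bar{\alpha})}v$, and then computing the two auxiliary quantities $u_{(\deg(\bar{\alpha}) - 1, 0)}$ and $u_{(\deg(\bar{\alpha}))}$ that show up in the conclusion of the lemma. Write $r = \deg(\bar{\alpha})$ and let $w = u + q^{hr}v$ play the role of ``$u$'' in Lemma \ref{l2}. The whole content of the argument is the bookkeeping identity that the $q^h$-adic digits of $w$ below position $r$ are exactly the $q^h$-adic digits of $u$ (since $0 \le u \le z_{\bar{\alpha}}(h) < q^{hr}$, so $u$ has at most $r$ digits), while the $q^h$-adic digits of $w$ from position $r$ onward are exactly the digits of $v$ (since $0 \le v \le z_{\bar{\beta}}(h)$ and $q^{hr}v$ is a shift of $v$ by $r$ positions).

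First I would record the digit decomposition: if $u = \sum_{i=0}^{r-1} u_i q^{hi}$ and $v = \sum_{j=0}^{\infty} v_j q^{hj}$ are the $q^h$-adic expansions, then, because $0 \le u < q^{hr}$, the expansion $w = u + q^{hr}v = \sum_{i=0}^{r-1} u_i q^{hi} + \sum_{j=0}^{\infty} v_j q^{h(r+j)}$ is itself the $q^h$-adic expansion of $w$ (no carries occur). Consequently, by the definition (\ref{u-r-s}) of $w_{(r-1,0)}$, we get $w_{(r-1,0)} = \sum_{i=0}^{r-1} w_i q^{hi} = \sum_{i=0}^{r-1} u_i q^{hi} = u$; and by the definition (\ref{u-s}) of $w_{(r)}$, we get $w_{(r)} = \sum_{i=0}^{\infty} w_{r+i} q^{hi} = \sum_{i=0}^{\infty} v_i q^{hi} = v$. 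These two identities are the crux.

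Then I would simply invoke Lemma \ref{l2} with the integer $w$ in place of $u$: it gives
\begin{align*}
\binom{z_{\bar{\alpha} \star \bar{\beta}}(h)}{w}_C \equiv \binom{z_{\bar{\alpha}}(h)}{w_{(\deg(\bar{\alpha})-1, 0)}}_C \binom{z_{\bar{\beta}}(h)}{w_{(\deg(\bar{\alpha}))}}_C \pmod{\wp},
\end{align*}
and substituting $w = u + q^{h\deg(\bar{\alpha})}v$, $w_{(\deg(\bar{\alpha})-1,0)} = u$, $w_{(\deg(\bar{\alpha}))} = v$ yields exactly the claimed congruence. One should also note in passing that $w \le z_{\bar{\alpha}\star\bar{\beta}}(h)$ — indeed $z_{\bar{\alpha}\star\bar{\beta}}(h) = z_{\bar{\alpha}}(h) + q^{hr} z_{\bar{\beta}}(h)$ by (\ref{z-alpha*beta}), which dominates $u + q^{hr}v$ termwise — so the left-hand binomial coefficient is genuinely a Carlitz binomial coefficient of the expected shape rather than zero, though this is not strictly needed since the congruence holds regardless.

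The only ``obstacle'' is making sure the no-carry claim is airtight: one must use the hypothesis $0 \le u \le z_{\bar{\alpha}}(h)$ together with the fact, visible from (\ref{z-from-S-h-omega}), that $z_{\bar{\alpha}}(h) = \sum_{i=0}^{r-1}\alpha_i q^{hi} < q^{hr}$ since each $\alpha_i < q^h$, to conclude $u < q^{hr}$ and hence that $u$ occupies only digit positions $0, \ldots, r-1$. Once that bound is in hand, the decomposition of $w$ into the digits of $u$ followed by the digits of $v$ is forced by uniqueness of $q^h$-adic expansions, and the rest is a direct citation of Lemma \ref{l2}. There is no real difficulty beyond this indexing check.
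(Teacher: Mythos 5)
Your proof is correct and follows essentially the same route as the paper's: both reduce to Lemma \ref{l2} applied to $w = u + q^{h\deg(\bar{\alpha})}v$ after verifying the digit identities $w_{(\deg(\bar{\alpha})-1,0)} = u$ and $w_{(\deg(\bar{\alpha}))} = v$. Your justification of the key bound $u < q^{h\deg(\bar{\alpha})}$ --- directly from $u \le z_{\bar{\alpha}}(h) \le q^{h\deg(\bar{\alpha})} - 1$ --- is in fact slightly cleaner than the paper's split into the cases $u = 0$ and $u > 0$ with a contradiction argument on the top digit of $u$, but it is the same idea.
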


\begin{proof}

Set
\begin{align*}
w = u + q^{h\deg(\bar{\alpha})}v \in \bZ_{\ge 0}.
\end{align*}
If $u = 0$, then since  $q^{h\deg(\bar{\alpha})}v \equiv 0 \pmod{q^{h\deg(\bar{\alpha})}}$, we deduce from  (\ref{u-r-s}) that
\begin{align}
\label{e1-c1}
w_{(\deg(\bar{\alpha}) - 1, 0)} = 0 = u.
\end{align}

Suppose now that $u > 0$, and let $u = u_0 + u_1q^h + \ldots + u_sq^{hs}$ be the $q^h$-adic expansion of $u$, where $u_s \ne 0$, and $0\le u_i \le q^h - 1$ for all $0 \le i \le s$. Let $\bar{\alpha} = (\alpha_0, \ldots, \alpha_r)$, where $r = \deg(\bar{\alpha}) - 1$, and the $\alpha_i$ are in $S_h$. Since $u \le z_{\bar{\alpha}}(h)$, there exists some $0 \le i \le r$ such that $\alpha_i \ne 0$. Let $e$ be the largest integer such that $\alpha_e > 0$, and $\alpha_i = 0$ for all $e< i \le r$. Then $z_{\bar{\alpha}}(h)$ can be written in the form
\begin{align*}
z_{\bar{\alpha}}(h) = \alpha_0 + \alpha_1q^h + \ldots + \alpha_eq^{he}.
\end{align*}
We contend that $s \le e$; otherwise, $s > e$, and hence $s \ge e + 1$. Since $0 \le \alpha_j \le q^h - 1$, we obtain that
\begin{align*}
z_{\bar{\alpha}}(h) = \alpha_0 + \alpha_1q^h + \ldots + \alpha_eq^{he} &\le (q^h - 1)(1 + q^h + \ldots + q^{he}) \\
&= q^{h(e + 1)} - 1 \\
&\le q^{hs} - 1 \\
&< q^{hs} \\
&\le u_0 + u_1q^h + \ldots u_sq^{hs} = u,
\end{align*}
which is a contradiction to the assumption that $z_{\bar{\alpha}}(h) \ge u$. Thus $s \le e$. In particular this implies that 
\begin{align*}
s \le r = \deg(\bar{\alpha}) - 1 < \deg(\bar{\alpha}),
\end{align*}
which in turn implies that
\begin{align*}
u = u_0 + u_1q^h + \ldots + u_sq^{hs} &\le (q^h - 1)(1 + q^h + \ldots + q^{hs}) \\
&= q^{h(s + 1)} - 1 \\
&\le q^{h \deg(\bar{\alpha})} - 1 \\
&< q^{h \deg(\bar{\alpha})}.
\end{align*}

Since $u > 0$, we deduce from the above inequality that
\begin{align*}
u = u_0 + u_1q^h + \ldots + u_sq^{hs} \not\equiv 0 \pmod{q^{h\deg(\bar{\alpha})}}
\end{align*}
Since $q^{h\deg(\bar{\alpha})}v \equiv 0 \pmod{q^{h\deg(\bar{\alpha})}}$, we deduce from the above congruence and (\ref{u-r-s}) that
\begin{align}
\label{e2-c1}
w_{(\deg(\bar{\alpha}) - 1, 0)} = u.
\end{align}
By (\ref{e1-c1}) and (\ref{e2-c1}), we deduce that
\begin{align}
\label{e3-c1}
w_{(\deg(\bar{\alpha}) - 1, 0)} = u.
\end{align}

Now write $v = \sum_{j = 0}^{\infty} v_j q^{hj}$, where $0 \le v_j < q^h$ for $j \ge 0$. Then
\begin{align*}
w = u + q^{h\deg(\bar{\alpha})}v = \sum_{j = 0}^su_jq^{hj} + \sum_{j = 0}^{\infty} v_jq^{h(j + \deg(\bar{\alpha}))}
\end{align*}
is the $q^h$-adic expansion of $w$. Hence we deduce from  (\ref{u-s}) that 
\begin{align}
\label{e4-c1}
w_{(\deg(\bar{\alpha}))} =  \sum_{j = 0}^{\infty} v_j q^{hj} = v.
\end{align}

Applying Lemma \ref{l2} with $w$ in the role of $u$, we deduce from (\ref{e3-c1}) and (\ref{e4-c1}) that
\begin{align*}
\binom{z_{\bar{\alpha} \star \bar{\beta}}(h)}{u + q^{h\deg(\bar{\alpha})}v}_C =\binom{z_{\bar{\alpha} \star \bar{\beta}}(h)}{w}_C \equiv \binom{z_{\bar{\alpha}}(h)}{w_{(\deg(\bar{\alpha})-1, 0)}}_C \binom{z_{\bar{\beta}}(h)}{w_{(\deg(\bar{\alpha}))}}_C = \binom{z_{\bar{\alpha}}(h)}{u}_C \binom{z_{\bar{\beta}}(h)}{v}_C \pmod{\wp}.
\end{align*}

\end{proof}

\begin{remark}
\label{r-c1}

Let $\bar{\alpha}, \bar{\beta} \in S_h^{\omega}$. Let 
\begin{align}
\label{e1-r-c1}
w = u + q^{h\deg(\bar{\alpha})}v
\end{align}
for some integers $u, v$ with $0 \le u \le z_{\bar{\alpha}}(h)$, and $0 \le v \le z_{\bar{\beta}}(h)$. Then $w$ is uniquely written in the form of (\ref{e1-r-c1}), i.e., if $w = u_1 + q^{h\deg(\bar{\alpha})}v_1$ for some integers $u_1, v_1$ with $0 \le u_1 \le z_{\bar{\alpha}}(h)$ and $0 \le v_1 \le z_{\bar{\beta}}(h)$, then $ u = u_1$ and $v = v_1$. Assume the contrary, i.e., there exist $0 \le u, u_1 \le z_{\bar{\alpha}}(h)$, and $0 \le v, v_1 \le z_{\bar{\beta}}(h)$ such that $(u, v) \ne (u_1, v_1)$ and 
\begin{align*}
w = u + q^{h\deg(\bar{\alpha})}v = u_1 + q^{h\deg(\bar{\alpha})}v_1.
\end{align*}
Hence 
\begin{align}
\label{e2-r-c1}
u - u_1 = q^{h\deg(\bar{\alpha})}(v_1 - v).
\end{align}
We see from the above equation that $u = u_1$ if and only if $v = v_1$. Since $(u, v) \ne (u_1, v_1)$, without loss of generality, we can assume that $u - u_1> 0$. Let $u - u_1 = \sum_{i = 0}^r\epsilon_i q^{hi}$ be the $q^h$-adic expansion of $u - u_1$, where $\epsilon_r > 0$, and $0 \le \epsilon_i \le q^h - 1$ for all $0 \le i \le r$. Note that $0 < u - u_1 \le z_{\bar{\alpha}}(h)$. Hence using the same arguments as in the proof of Corollary \ref{c1}, one can show that $r < \deg(\bar{\alpha})$, and thus $0 < u - u_1 < q^{h\deg(\bar{\alpha})}$. Therefore
\begin{align*}
u - u_1 \not\equiv 0 \pmod{q^{h\deg(\bar{\alpha})}},
\end{align*}
which is a contradiction to (\ref{e2-r-c1}). Hence $u = u_1$ and $v = v_1$, which proves our contention.

\end{remark}

Let $\wp$ be a prime of degree $h \ge 1$ in $\bA$, and let $\fq$ be a primitive root modulo $\wp$. For each $\bar{\alpha} \in S_h^{\omega}$ and each $j \in \bZ$, let $\cS_j(\bar{\alpha})$ be the subset of $\bZ$ defined by
\begin{align}
\label{cS-j-bar-alpha}
\cS_j(\bar{\alpha}) = \{u \in \bZ \; |\;  \text{$ 0 \le u \le z_{\bar{\alpha}}(h)$ and $\binom{z_{\bar{\alpha}}(h)}{u}_C \equiv \fq^j \pmod{\wp}$}  \}.
\end{align}

The next lemma will play a key role in the proof of our main theorem. 

\begin{lemma}
\label{main-l}

Let $\wp$ be a prime of degree $h \ge 1$ in $\bA$, and let $\fq$  be a primitive root modulo $\wp$. Let $m$ be an integer, and let $\bar{\alpha}, \bar{\beta} \in S_h^{\omega}$. Then the set $\cup_{j = 0}^{q^h - 2}\cS_j(\bar{\alpha}) \times \cS_{m - j}(\bar{\beta})$ is in bijection with $\cS_m(\bar{\alpha}\star \bar{\beta})$.

\end{lemma}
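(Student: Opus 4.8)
The plan is to exhibit an explicit bijection and check it is well-defined using Corollary \ref{c1} and the uniqueness statement in Remark \ref{r-c1}. First I would define the candidate map
\[
\Phi : \bigcup_{j = 0}^{q^h - 2} \cS_j(\bar{\alpha}) \times \cS_{m - j}(\bar{\beta}) \longrightarrow \cS_m(\bar{\alpha} \star \bar{\beta}), \qquad \Phi(u, v) = u + q^{h\deg(\bar{\alpha})} v,
\]
where on the left-hand side the indices $j$ and $m - j$ are taken modulo $q^h - 1$ (since $\fq$ is a primitive root modulo $\wp$, the exponent of $\fq$ only matters modulo $q^h - 1 = \card((\bA/\wp\bA)^{\times})$), so that the union is really over the $q^h - 1$ residue classes of $j$.

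The first step is to show $\Phi$ lands in $\cS_m(\bar{\alpha} \star \bar{\beta})$. If $(u, v) \in \cS_j(\bar{\alpha}) \times \cS_{m-j}(\bar{\beta})$, then $0 \le u \le z_{\bar{\alpha}}(h)$ and $0 \le v \le z_{\bar{\beta}}(h)$, so Corollary \ref{c1} applies and gives
\[
\binom{z_{\bar{\alpha} \star \bar{\beta}}(h)}{u + q^{h\deg(\bar{\alpha})} v}_C \equiv \binom{z_{\bar{\alpha}}(h)}{u}_C \binom{z_{\bar{\beta}}(h)}{v}_C \equiv \fq^j \fq^{m - j} = \fq^m \pmod{\wp};
\]
moreover $0 \le u + q^{h\deg(\bar{\alpha})} v \le z_{\bar{\alpha}}(h) + q^{h\deg(\bar{\alpha})} z_{\bar{\beta}}(h) = z_{\bar{\alpha} \star \bar{\beta}}(h)$ by (\ref{z-alpha*beta}), so indeed $\Phi(u,v) \in \cS_m(\bar{\alpha} \star \bar{\beta})$.

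The second step is injectivity: this is exactly Remark \ref{r-c1}, which says that $w = u + q^{h\deg(\bar{\alpha})} v$ with $0 \le u \le z_{\bar{\alpha}}(h)$ and $0 \le v \le z_{\bar{\beta}}(h)$ determines $(u, v)$ uniquely; note the constraint $j \bmod (q^h-1)$ is then forced by $u$, so no two distinct elements of the disjoint union collide. The third step is surjectivity: given $w \in \cS_m(\bar{\alpha} \star \bar{\beta})$, I would write $w$ in its $q^h$-adic expansion and split it as $w = u + q^{h\deg(\bar{\alpha})} v$ by taking $u = w_{(\deg(\bar{\alpha}) - 1, 0)}$ and $v = w_{(\deg(\bar{\alpha}))}$ in the notation of (\ref{u-s}) and (\ref{u-r-s}); Lemma \ref{l1} guarantees $w = u + q^{h\deg(\bar{\alpha})} v$. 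One then checks $0 \le u \le z_{\bar{\alpha}}(h)$ and $0 \le v \le z_{\bar{\beta}}(h)$: if either inequality failed, Thakur's theorem (Theorem \ref{t-thakur}) would force some factor $\binom{\alpha_i}{\gamma_i}_C$ in the product expansion of $\binom{z_{\bar{\alpha}\star\bar{\beta}}(h)}{w}_C$ to vanish, contradicting $\binom{z_{\bar{\alpha}\star\bar{\beta}}(h)}{w}_C \equiv \fq^m \not\equiv 0$. Finally, setting $j$ so that $\binom{z_{\bar{\alpha}}(h)}{u}_C \equiv \fq^j$, Corollary \ref{c1} gives $\binom{z_{\bar{\beta}}(h)}{v}_C \equiv \fq^{m - j}$, so $(u, v) \in \cS_j(\bar{\alpha}) \times \cS_{m-j}(\bar{\beta})$ and $\Phi(u,v) = w$.

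The main obstacle is the surjectivity step — specifically verifying that the digit-truncation $(u, v)$ of an arbitrary $w \in \cS_m(\bar{\alpha}\star\bar{\beta})$ genuinely satisfies $u \le z_{\bar{\alpha}}(h)$ and $v \le z_{\bar{\beta}}(h)$ rather than merely the a priori bound $u < q^{h\deg(\bar{\alpha})}$. The point is that the nonvanishing of $\binom{z_{\bar{\alpha}\star\bar{\beta}}(h)}{w}_C$ modulo $\wp$ forces, digitwise via Theorem \ref{t-thakur}, that $\gamma_i \le \alpha_i$ for every $i$, where $\gamma_i, \alpha_i$ are the $q^h$-adic digits of $w$ and $z_{\bar{\alpha}\star\bar{\beta}}(h)$; restricting to $0 \le i \le \deg(\bar{\alpha}) - 1$ yields $u \le z_{\bar{\alpha}}(h)$, and the complementary range yields $v \le z_{\bar{\beta}}(h)$. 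I would spell this digitwise comparison out carefully, since it is the crux of the argument.
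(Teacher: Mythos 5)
Your proposal is correct and follows essentially the same route as the paper: the same map $(u, v) \mapsto u + q^{h\deg(\bar{\alpha})}v$, well-definedness via Corollary \ref{c1}, injectivity via Remark \ref{r-c1}, and surjectivity via the digit truncations $w_{(\deg(\bar{\alpha})-1, 0)}$ and $w_{(\deg(\bar{\alpha}))}$ together with Lemmas \ref{l1} and \ref{l2}. The only (immaterial) difference is that the paper deduces $u \le z_{\bar{\alpha}}(h)$ and $v \le z_{\bar{\beta}}(h)$ directly from the definition (\ref{C-binomial}) of the Carlitz binomial coefficient, which vanishes outright when the lower entry exceeds the upper, rather than by your digitwise comparison through Theorem \ref{t-thakur}.
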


\begin{proof}

We define a mapping $\Psi : \cup_{j = 0}^{q^h - 2}\cS_j(\bar{\alpha}) \times \cS_{m - j}(\bar{\beta}) \to \cS_m(\bar{\alpha}\star \bar{\beta})$ as follows. Let $(u, v) \in \cS_j(\bar{\alpha}) \times \cS_{m - j}(\bar{\beta})$ for some $0 \le j \le q^h - 2$. By Corollary \ref{c1}, we see that
\begin{align}
\label{e1-main-l}
\binom{z_{\bar{\alpha}*\bar{\beta}}(h)}{u + q^{h\deg(\bar{\alpha})}v}_C \equiv \binom{z_{\bar{\alpha}}(h)}{u}_C\binom{z_{\bar{\beta}}(h)}{u}_C \equiv \fq^j\fq^{m - j} = \fq^m  \pmod{\wp}.
\end{align}

Since $0 \le u \le z_{\bar{\alpha}}(h)$ and $0 \le v \le z_{\bar{\beta}}(h)$, we deduce from (\ref{z-alpha*beta}) that
\begin{align}
\label{e2-main-l}
0 \le u + q^{h\deg(\bar{\alpha})}v \le z_{\bar{\alpha}*\bar{\beta}}(h).
\end{align}

Set $\Psi(u, v) = u + q^{h\deg(\bar{\alpha})}v$. We see from (\ref{e1-main-l}) and (\ref{e2-main-l}) that $\Psi(u, v) \in \cS_m(\bar{\alpha}\star \bar{\beta})$, and thus $\Psi$ is a well-defined mapping from $\cup_{j = 0}^{q^h - 2}\cS_j(\bar{\alpha}) \times \cS_{m - j}(\bar{\beta})$ to $\cS_m(\bar{\alpha}\star \bar{\beta})$. We contend that $\Psi$ is a bijection. 

We first prove that $\Psi$ is surjective. Indeed, let $w$ be any integer in $\cS_m(\bar{\alpha}\star \bar{\beta})$. Set 
\begin{align}
\label{u-main-l}
u = w_{(\deg(\bar{\alpha})-1, 0)} \ge 0,
\end{align}
and
\begin{align}
\label{v-main-l}
v = w_{(\deg(\bar{\alpha}))} \ge 0.
\end{align}
Using Lemma \ref{l2}, we see that
\begin{align}
\label{e3-main-l}
\binom{z_{\bar{\alpha}}(h)}{u}_C \binom{z_{\bar{\beta}}(h)}{v}_C = \binom{z_{\bar{\alpha}}(h)}{w_{(\deg(\bar{\alpha})-1, 0)}}_C \binom{z_{\bar{\beta}}(h)}{w_{(\deg(\bar{\alpha}))}}_C \equiv \binom{z_{\bar{\alpha} \star \bar{\beta}}(h)}{w}_C \equiv \fq^m \pmod{\wp},
\end{align}
which in particular implies that $\binom{z_{\bar{\alpha}}(h)}{u}_C, \binom{z_{\bar{\beta}}(h)}{v}_C$ belong to the multiplicative group $(\bA/\wp\bA)^{\times}$. Since $\fq$ is a primitive root modulo $\wp$, we see that
\begin{align}
\label{e4-main-l}
\binom{z_{\bar{\alpha}}(h)}{u}_C \equiv \fq^j \pmod{\wp}
\end{align}
for some $0 \le j \le q^h - 2$. 

Since $\binom{z_{\bar{\alpha}}(h)}{u}_C \ne 0$ and $\binom{z_{\bar{\beta}}(h)}{v}_C \ne 0$, the definition of the Carlitz binomial coefficients implies that $0 \le u \le z_{\bar{\alpha}}(h)$ and $0 \le v \le z_{\bar{\beta}}(h)$. It follows from (\ref{e3-main-l}) and (\ref{e4-main-l}) that
\begin{align}
\label{e5-main-l}
\binom{z_{\bar{\beta}}(h)}{v}_C \equiv \fq^{m - j} \pmod{\wp}.
\end{align}
By (\ref{e4-main-l}) and (\ref{e5-main-l}), we deduce that $u \in \cS_j(\bar{\alpha})$ and $v \in \cS_{m - j}(\bar{\beta})$. It follows from Lemma \ref{l1}, (\ref{u-main-l}), and (\ref{v-main-l}) that
\begin{align*}
\Psi(u, v) = u + q^{h\deg(\bar{\alpha})}v = w_{(\deg(\bar{\alpha})-1, 0)} + q^{h\deg(\bar{\alpha})}w_{(\deg(\bar{\alpha}))} = w,
\end{align*}
which proves that $\Psi$ is surjective. 

We now prove that $\Psi$ is injective. Assume that $(u, v), (u_1, v_1) \in \cup_{j = 0}^{q^h - 2}\cS_j(\bar{\alpha}) \times \cS_{m - j}(\bar{\beta})$ such that 
\begin{align*}
u + q^{h\deg(\bar{\alpha})}v = \Psi(u, v) = \Psi(u_1, v_1) = u_1 + q^{h\deg(\bar{\alpha})}v_1.
\end{align*}
Remark \ref{r-c1} immediately implies that $(u, v) = (u_1, v_1)$, which proves that $\Psi$ is injective.

From what we have proved above, $\Psi$ is a bijection.

\end{proof}

\begin{corollary}
\label{c2}

Let $\wp$ be a prime of degree $h \ge 1$ in $\bA$, and let $\fq$  be a primitive root modulo $\wp$. Let $\bar{\alpha}, \bar{\beta} \in S_h^{\omega}$. Then
\begin{align*}
\card(\cS_m(\bar{\alpha}\star \bar{\beta})) = \sum_{j = 0}^{q^h - 2}\card(\cS_j(\bar{\alpha}))\card(\cS_{m - j}(\bar{\beta}))
\end{align*}
for any $m \in \bZ$, where $\card(\cdot)$ denotes the cardinality of a set.

\end{corollary}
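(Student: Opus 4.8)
The proof of Corollary \ref{c2} is an immediate counting consequence of Lemma \ref{main-l}, so the plan is to extract the cardinality identity from the bijection $\Psi$ together with a disjointness observation about the indexing union. First I would recall that Lemma \ref{main-l} furnishes, for each fixed $m \in \bZ$, a bijection
\begin{align*}
\Psi : \bigcup_{j = 0}^{q^h - 2}\cS_j(\bar{\alpha}) \times \cS_{m - j}(\bar{\beta}) \longrightarrow \cS_m(\bar{\alpha}\star \bar{\beta}),
\end{align*}
so in particular the two sides have the same cardinality. Since $\cS_m(\bar{\alpha}\star \bar{\beta})$ is a finite set (it consists of integers between $0$ and $z_{\bar{\alpha}\star \bar{\beta}}(h)$), the domain of $\Psi$ is finite, and $\card(\cS_m(\bar{\alpha}\star \bar{\beta})) = \card\big(\bigcup_{j = 0}^{q^h - 2}\cS_j(\bar{\alpha}) \times \cS_{m - j}(\bar{\beta})\big)$.

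Next I would observe that the union on the right is a disjoint union. The point is that the sets $\cS_j(\bar{\alpha})$, for $j$ ranging over a set of representatives modulo $q^h - 1$, are pairwise disjoint: an integer $u$ with $0 \le u \le z_{\bar{\alpha}}(h)$ for which $\binom{z_{\bar{\alpha}}(h)}{u}_C$ is a unit modulo $\wp$ lies in exactly one class $\fq^j \pmod{\wp}$ with $0 \le j \le q^h - 2$, because $\fq$ is a primitive root modulo $\wp$ and $(\bA/\wp\bA)^{\times}$ has order $q^h - 1$; and if $\binom{z_{\bar{\alpha}}(h)}{u}_C \equiv 0$ then $u$ lies in no $\cS_j(\bar{\alpha})$ at all. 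Consequently the products $\cS_j(\bar{\alpha}) \times \cS_{m - j}(\bar{\beta})$ for distinct $j \in \{0, 1, \ldots, q^h - 2\}$ are pairwise disjoint subsets of $\bZ \times \bZ$ (their first coordinates already lie in disjoint sets). Therefore
\begin{align*}
\card\Big(\bigcup_{j = 0}^{q^h - 2}\cS_j(\bar{\alpha}) \times \cS_{m - j}(\bar{\beta})\Big) = \sum_{j = 0}^{q^h - 2}\card\big(\cS_j(\bar{\alpha}) \times \cS_{m - j}(\bar{\beta})\big) = \sum_{j = 0}^{q^h - 2}\card(\cS_j(\bar{\alpha}))\,\card(\cS_{m - j}(\bar{\beta})),
\end{align*}
where the last equality is the product rule for cardinalities of finite Cartesian products. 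Chaining this with the bijection from the previous step yields the claimed formula for every $m \in \bZ$.

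I do not expect a genuine obstacle here, since all the substantive work is already done in Lemma \ref{main-l}; the only thing that requires a moment's care is justifying the disjointness of the union, i.e. making explicit that $\cS_j(\bar{\alpha}) \cap \cS_{j'}(\bar{\alpha}) = \emptyset$ for $0 \le j < j' \le q^h - 2$, which is exactly where the hypothesis that $\fq$ is a primitive root (and hence the $\fq^j$ are distinct modulo $\wp$ for $j$ in that range) gets used. One could also note for completeness that all sums and sets involved are finite, so there is no convergence or well-definedness issue in writing $\card$ of a union as a sum of $\card$'s.
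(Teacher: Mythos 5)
Your proposal matches the paper's own proof: invoke the bijection of Lemma \ref{main-l}, note that the sets $\cS_j(\bar{\alpha}) \times \cS_{m-j}(\bar{\beta})$ for distinct $j \in \{0,\ldots,q^h-2\}$ are pairwise disjoint, and conclude by the product rule for finite cardinalities. Your extra remark spelling out why disjointness follows from $\fq$ being a primitive root is a harmless elaboration of what the paper leaves implicit; the argument is correct as written.
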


\begin{proof}

Take any integer $m$. By Lemma \ref{main-l}, we know that the set $\cup_{j = 0}^{q^h - 2}\cS_j(\bar{\alpha}) \times \cS_{m - j}(\bar{\beta})$ is in bijection with $\cS_m(\bar{\alpha}\star \bar{\beta})$, and thus
\begin{align*}
\card(\cS_m(\bar{\alpha}\star \bar{\beta})) = \card\left(\cup_{j = 0}^{q^h - 2}\cS_j(\bar{\alpha}) \times \cS_{m - j}(\bar{\beta})\right).
\end{align*}

On the other hand, we see that
\begin{align*}
\left(\cS_j(\bar{\alpha}) \times \cS_{m - j}(\bar{\beta})\right) \bigcap \left(\cS_i(\bar{\alpha}) \times \cS_{m - i}(\bar{\beta})\right) =\emptyset
\end{align*}
for any $0 \le i \ne j \le q^h - 2$. Hence
\begin{align*}
\card(\cS_m(\bar{\alpha}\star \bar{\beta})) =\card\left(\cup_{j = 0}^{q^h - 2}\cS_j(\bar{\alpha}) \times \cS_{m - j}(\bar{\beta})\right) = \sum_{j = 0}^{q^h - 2}\card(\cS_j(\bar{\alpha}))\card(\cS_{m - j}(\bar{\beta})),
\end{align*}
which proves our contention.

\end{proof}

\subsection{The distribution of the Carlitz binomial coefficients modulo $\wp$}
\label{subsection-the-distribution-of-the-C-bc}

In this subsection, we prove the main result in this paper. We begin by introducing some notation that will be used in the statement of our main theorem in this paper. Throughout this subsection, we fix a prime $\wp$ of degree $h \ge 1$ in $\bA$, and let $\fq$ be a primitive root modulo $\wp$. 

We define a sequence of polynomials $\{\cG_n(x)\}_{n \ge 0} \subset \bZ[x]$ as follows. Take an arbitrary integer $n \in \bZ_{\ge 0}$. For each integer $j$, we define $\epsilon_j(n)$ to be the number of integers $m$ such that $0 \le m \le n$ and the Carlitz binomial coefficient $\binom{n}{m}_C$ satisfies
\begin{align}
\label{eq-fc-n}
\binom{n}{m}_C \equiv \fq^j \pmod{\wp}.
\end{align}
In other words,
\begin{align*}
\epsilon_j(n) = \card\left(\left\{m \in \bZ \; | \; \text{$0 \le m \le n$ and $\binom{n}{m}_C \equiv \fq^j \pmod{\wp}$} \right\} \right). 
\end{align*}

Let $\cG_n(x) \in \bZ[x]$ be the polynomial defined by
\begin{align}
\label{eq-cG_n}
\cG_n(x) = \sum_{i = 0}^{q^h - 2}\epsilon_i(n)x^i.
\end{align}

\begin{remark}
\label{G0-remark}

From the definition of $\epsilon_j(n)$, we see that
\begin{align*}
\epsilon_j(0) =
\begin{cases}
1 \; \; &\text{if $j = 0$,} \\
0 \; \; &\text{if $1 \le j \le  q^h - 2$.} 
\end{cases}
\end{align*}
It follows that 
\begin{align*}
\cG_0(x) = 1.
\end{align*}

\end{remark}

\begin{remark}
\label{r-in-main-thm}

Fix an integer $n \in \bZ_{\ge 0}$, and let $\epsilon_j(n)$ be as above for each integer $j$. Since the multiplicative group $(\bA/\wp\bA)^{\times}$ is of cardinality $q^h - 1$, it follows from (\ref{eq-fc-n}) that $\epsilon_j(n)$ is periodic in $j$ with the period $q^h - 1$, i.e.,
\begin{align*}
\epsilon_j(n) = \epsilon_{j + \ell(q^h - 1)}(n)
\end{align*}
for any $\ell \in \bZ$.

\end{remark}

For each $n \in \bZ_{\ge 0}$, we define a set of non-negative integers $\{\fc_j(n)\}_{j = 0}^{q^h - 1}$ associated to $n$ as follows. If $n = 0$, set 
\begin{align*}
\fc_0(0) = 1,
\end{align*}
and
\begin{align*}
\fc_j(0) = 0
\end{align*}
for all $1 \le j \le q^h - 1$. 

If $n > 0$, let $n = \sum_{i = 0}^r n_i q^{hi}$ be the $q^h$-adic expansion of $n$, where $0 \le n_i \le q^h - 1$ for all $0 \le i \le r$, and $n_r > 0$. For each integer $0 \le j \le q^h - 1$, let $\fc_j(n)$ be the number of times the integer $j$ appears in the set $\{n_0, n_1, \ldots, n_r\}$, where the latter is the set of digits occurring in the $q^h$-expansion of $n$. 

\begin{remark}
\label{cj-remark}

If we make a convention that $0$ appears only one time in the $q^h$-adic expansion of the integer $n = 0$, then $\fc_j(0)$ can be interpreted as the number of times the integer $j$ appears in the set $\{0\}$ for each $0 \le j \le q^h - 1$.

\end{remark}

We are now ready to state our main result in this paper which can be viewed as a function field analogue of the Garfield--Wilf theorem.

\begin{theorem}
\label{main-theorem}

Let $\wp$ be a prime of degree $h \ge 1$ in $\bA$, and let $n \in \bZ_{\ge 0}$. Then
\begin{align*}
\cG_n(x) \equiv \prod_{i = 0}^{q^h - 1}\cG_i(x)^{\fc_i(n)} \pmod{(x^{q^h - 1} - 1)}.
\end{align*}

\end{theorem}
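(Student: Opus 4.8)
The plan is to reduce the statement, via the digit decomposition of $n$ in base $q^h$, to a repeated application of the "multiplicativity" encoded in Corollary \ref{c2}. The key observation is that the polynomial $\cG_n(x)$ is nothing but the generating function that records, for each residue class $\fq^j$, how many $m$ with $0 \le m \le n$ satisfy $\binom{n}{m}_C \equiv \fq^j$; that is, writing $n = z_{\bar{\alpha}}(h)$ for the word $\bar{\alpha} = (n_0, n_1, \ldots, n_r) \in S_h^{\omega}$, we have $\epsilon_j(n) = \card(\cS_j(\bar{\alpha}))$, so that $\cG_n(x) \equiv \sum_{j = 0}^{q^h - 2} \card(\cS_j(\bar{\alpha})) x^j \pmod{x^{q^h - 1} - 1}$. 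Since $\epsilon_j(n)$ is periodic in $j$ with period $q^h - 1$ (Remark \ref{r-in-main-thm}), working modulo $x^{q^h - 1} - 1$ is exactly the natural bookkeeping device: multiplication of two such generating polynomials modulo $x^{q^h - 1} - 1$ is cyclic convolution of the coefficient sequences, which is precisely the operation appearing on the right-hand side of Corollary \ref{c2}.

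First I would make the correspondence $n \leftrightarrow \bar{\alpha}$ precise and record the identity $\cG_{z_{\bar{\alpha}}(h)}(x) \equiv \sum_{j = 0}^{q^h - 2} \card(\cS_j(\bar{\alpha})) x^j \pmod{x^{q^h - 1} - 1}$, valid for all $\bar{\alpha} \in S_h^{\omega}$ (and also for the one-letter words $(i)$ with $0 \le i \le q^h - 1$, where $z_{(i)}(h) = i$, matching the factors $\cG_i(x)$ on the right). Next, for words $\bar{\alpha}, \bar{\beta} \in S_h^{\omega}$, I would show
\begin{align*}
\cG_{z_{\bar{\alpha} \star \bar{\beta}}(h)}(x) \equiv \cG_{z_{\bar{\alpha}}(h)}(x) \cdot \cG_{z_{\bar{\beta}}(h)}(x) \pmod{x^{q^h - 1} - 1}.
\end{align*}
This is immediate from Corollary \ref{c2}: the coefficient of $x^m$ in the product $\cG_{z_{\bar{\alpha}}(h)}(x)\cG_{z_{\bar{\beta}}(h)}(x)$, reduced modulo $x^{q^h - 1} - 1$, is $\sum_{j} \card(\cS_j(\bar{\alpha}))\card(\cS_{m - j}(\bar{\beta}))$ with indices read cyclically, and by the periodicity in Remark \ref{r-in-main-thm} this cyclic sum equals $\sum_{j = 0}^{q^h - 2} \card(\cS_j(\bar{\alpha}))\card(\cS_{m - j}(\bar{\beta})) = \card(\cS_m(\bar{\alpha}\star\bar{\beta})) = \epsilon_m(z_{\bar{\alpha}\star\bar{\beta}}(h))$.

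Finally I would write $\bar{\alpha} = (n_0) \star (n_1) \star \cdots \star (n_r)$ when $n > 0$ (so that $z_{\bar{\alpha}}(h) = n$ by \eqref{z-from-S-h-omega}), and iterate the one-step multiplicativity $r$ times to get $\cG_n(x) \equiv \prod_{i = 0}^r \cG_{n_i}(x) \pmod{x^{q^h - 1} - 1}$. Regrouping the factors by the value of the digit, the number of indices $i$ with $n_i = j$ is exactly $\fc_j(n)$, so this product is $\prod_{j = 0}^{q^h - 1} \cG_j(x)^{\fc_j(n)}$, as claimed; the case $n = 0$ is handled separately using $\cG_0(x) = 1$ (Remark \ref{G0-remark}) and the convention of Remark \ref{cj-remark}. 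The main obstacle I anticipate is not conceptual but a matter of care: one must verify that the passage to the quotient ring $\bZ[x]/(x^{q^h - 1} - 1)$ correctly turns ordinary polynomial multiplication into the cyclic convolution that Corollary \ref{c2} provides, keeping straight that $\cG_n(x)$ as literally defined in \eqref{eq-cG_n} has degree at most $q^h - 2$ while its "true" coefficient sequence $\epsilon_j(n)$ is $(q^h - 1)$-periodic — so all the identities are congruences mod $x^{q^h - 1} - 1$, never equalities of polynomials.
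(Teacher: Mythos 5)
Your proposal is correct and follows essentially the same route as the paper: the paper packages your ``one-step multiplicativity'' as the statement that the map $\Gamma(\bar{\alpha}) = \sum_{m=0}^{q^h-2}\epsilon_m(z_{\bar{\alpha}}(h))x^m$ is a semigroup homomorphism from $(S_h^{\omega},\star)$ to $\bZ[x]/(x^{q^h-1}-1)$, proved via Corollary \ref{c2} together with the periodicity of $\epsilon_j$ exactly as you describe, and then applies it to the decomposition $(n_0)\star\cdots\star(n_r)$ before regrouping by digit value. No substantive differences.
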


\begin{proof}

Throughout the proof, we will use the same notation as in Subsection \ref{subsection-semigroup}. Let $S_h^{\omega}$ be the semigroup defined in Subsection \ref{subsection-semigroup}. Let $W_h$ be the semigroup $\bZ[x]/(x^{q^h - 1} - 1)\bZ[x]$ equipped with the multiplication of polynomials modulo $(x^{q^h - 1} - 1)$. Every element in $W_h$ can be represented by a polynomial $q(x) \in \bZ[x]$ of degree $\le q^h - 2$. 

Let $\Gamma: S_h^{\omega} \rightarrow W_h$ be the mapping defined by
\begin{align}
\label{Gamma-d}
\Gamma(\bar{\alpha}) = \sum_{m = 0}^{q^h - 2} \epsilon_m(z_{\bar{\alpha}}(h))x^m
\end{align}
for each $\bar{\alpha} \in S_h^{\omega}$, where $z_{\bar{\alpha}}(h)$ is defined by (\ref{z-from-S-h-omega}). We contend that $\Gamma$ is a semigroup homomorphism, i.e., $\Gamma(\bar{\alpha}\star \bar{\beta}) = \Gamma(\bar{\alpha})\Gamma(\bar{\beta})$ for any $\bar{\alpha}, \bar{\beta} \in S_h^{\omega}$. Indeed we see that
\begin{align*}
\Gamma(\bar{\alpha}\star \bar{\beta}) &= \sum_{m = 0}^{q^h - 2} \epsilon_m(z_{\bar{\alpha}\star \bar{\beta}}(h))x^m \\
&=  \sum_{m = 0}^{q^h - 2} \card(\cS_m(\bar{\alpha}\star \bar{\beta}))x^m,
\end{align*}
where $\cS_m(\bar{\alpha}\star \bar{\beta})$ is defined by (\ref{cS-j-bar-alpha}) for each $0 \le m \le q^h - 2$, and $\card(\cdot)$ denotes the cardinality of a set. 

By Corollary \ref{c2}, we deduce that
\begin{align}
\label{e1-main-t}
\Gamma(\bar{\alpha}\star \bar{\beta}) &= \sum_{m = 0}^{q^h - 2} \sum_{j = 0}^{q^h - 2}\card(\cS_j(\bar{\alpha}))\card(\cS_{m - j}(\bar{\beta}))x^m \nonumber \\
&=  \sum_{m = 0}^{q^h - 2}\sum_{j = 0}^{q^h - 2} \epsilon_j(z_{\bar{\alpha}}(h)) \epsilon_{m - j}(z_{\bar{\beta}}(h))x^m.
\end{align}

By Remark \ref{r-in-main-thm}, we know that 
\begin{align}
\label{e2-main-t}
\epsilon_{m - j}(z_{\bar{\beta}}(h)) = \epsilon_{q^h - 1+ (m - j)}(z_{\bar{\beta}}(h)).
\end{align}

Since 
\begin{align*}
x^{q^h - 1 + m} \equiv x^m \pmod{(x^{q^h - 1} -1)}
\end{align*}
for any $0 \le m \le q^h - 2$, we deduce from (\ref{e1-main-t}), (\ref{e2-main-t}) that 
\begin{align}
\label{e3-main-t}
\Gamma(\bar{\alpha}\star \bar{\beta}) &= \sum_{m = 0}^{q^h - 2}\sum_{j = 0}^{q^h - 2} \epsilon_j(z_{\bar{\alpha}}(h)) \epsilon_{q^h - 1 + m - j}(z_{\bar{\beta}})x^{q^h - 1 + m} \pmod{(x^{q^h - 1} -1)} \nonumber \\
&= \sum_{j = 0}^{q^h - 2} \epsilon_j(z_{\bar{\alpha}}(h)) x^j\left(\sum_{m = 0}^{q^h - 2}  \epsilon_{q^h - 1 + m - j}(z_{\bar{\beta}}(h))x^{q^h - 1 + m - j}\right) \pmod{(x^{q^h - 1} -1)} \nonumber\\
&=  \sum_{j = 0}^{q^h - 2} \epsilon_j(z_{\bar{\alpha}}(h)) x^j\Lambda(j)  \pmod{(x^{q^h - 1} -1)},
\end{align}
where
\begin{align}
\label{Lambda-main-t}
\Lambda(j) =  \sum_{m = 0}^{q^h - 2}  \epsilon_{q^h - 1 + m - j}(z_{\bar{\beta}}(h))x^{q^h - 1 + m - j}.
\end{align}
We contend that
\begin{align}
\label{e4-main-t}
\Lambda(j) \equiv \Lambda(0)  \pmod{(x^{q^h - 1} -1)}
\end{align}
for all $0 \le j \le q^h - 2$. 

In order to prove (\ref{e4-main-t}), it suffices to prove that $\Lambda(j) \equiv \Lambda(j + 1)  \pmod{(x^{q^h - 1} -1)}$ for all $0 \le j \le q^h - 3$. Indeed, take an arbitrary integer $0 \le j \le q^h - 3$. We see that
\begin{align}
\label{e5-main-t}
\Lambda(j + 1) &=  \sum_{m = 0}^{q^h - 2}  \epsilon_{q^h - 1 + m - (j + 1)}(z_{\bar{\beta}}(h))x^{q^h - 1 + m - (j + 1)} \nonumber \\
&=   \epsilon_{q^h - 2 - j}(z_{\bar{\beta}}(h))x^{q^h - 2 - j} + \sum_{m = 1}^{q^h - 2}  \epsilon_{q^h - 1 + (m - 1) - j}(z_{\bar{\beta}}(h))x^{q^h - 1 + (m - 1) - j} \nonumber \\
&= \epsilon_{q^h - 2 - j}(z_{\bar{\beta}}(h))x^{q^h - 2 - j} + \sum_{m = 0}^{q^h - 3}  \epsilon_{q^h - 1 + m  - j}(z_{\bar{\beta}}(h))x^{q^h - 1 + m - j}.
\end{align}
By Remark \ref{r-in-main-thm}, and since $x^{q^h - 2 - j} \equiv x^{2q^h - 3 - j} \pmod{(x^{q^h - 1} - 1)}$, we deduce that 
\begin{align}
\label{e6-main-t}
 \epsilon_{q^h - 2 - j}(z_{\bar{\beta}}(h))x^{q^h - 2 - j} \equiv \epsilon_{2q^h - 3 - j}(z_{\bar{\beta}}(h))x^{2q^h - 3 - j} = \epsilon_{q^h - 1 + (q^h - 2) - j}(z_{\bar{\beta}}(h))x^{q^h - 1 + (q^h - 2) - j} \pmod{(x^{q^h - 1} - 1)}.
\end{align}

By (\ref{e5-main-t}) and (\ref{e6-main-t}), we deduce that
\begin{align*}
\Lambda(j + 1) &\equiv \epsilon_{q^h - 1 + (q^h - 2) - j}(z_{\bar{\beta}}(h))x^{q^h - 1 + (q^h - 2) - j}  + \sum_{m = 0}^{q^h - 3}  \epsilon_{q^h - 1 + m  - j}(z_{\bar{\beta}})x^{q^h - 1 + m - j} \pmod{(x^{q^h - 1} - 1)} \\
&= \sum_{m = 0}^{q^h - 2 }  \epsilon_{q^h - 1 + m  - j}(z_{\bar{\beta}}(h))x^{q^h - 1 + m - j} \pmod{(x^{q^h - 1} - 1)} \\
&= \Lambda(j) \pmod{(x^{q^h - 1} - 1)},
\end{align*}
and thus (\ref{e4-main-t}) follows immediately.

On the other hand, by Remark \ref{r-in-main-thm}, and since $x^{q^h - 1 + m} \equiv x^{m} \pmod{(x^{q^h - 1} - 1)}$ for each $0 \le m \le q^h - 2$, we see that
\begin{align}
\label{e7-main-t}
\Lambda(0) =  \sum_{m = 0}^{q^h - 2}  \epsilon_{q^h - 1 + m}(z_{\bar{\beta}}(h))x^{q^h - 1 + m} \equiv  \sum_{m = 0}^{q^h - 2}  \epsilon_{m}(z_{\bar{\beta}}(h))x^{m} = \Gamma(\bar{\beta}) \pmod{(x^{q^h - 1} - 1)}.
\end{align}

By (\ref{e3-main-t}), (\ref{e4-main-t}), (\ref{e7-main-t}), we deduce that
\begin{align*} 
\Gamma(\bar{\alpha}\star \bar{\beta}) \equiv  \sum_{j = 0}^{q^h - 2} \epsilon_j(z_{\bar{\alpha}}(h)) x^j\Lambda(0) \equiv \left( \sum_{j = 0}^{q^h - 2} \epsilon_j(z_{\bar{\alpha}}(h)) x^j \right) \Gamma(\bar{\beta}) = \Gamma(\bar{\alpha})\Gamma(\bar{\beta}) \pmod{(x^{q^h - 1} - 1)},
\end{align*}
which proves that $\Gamma$ is a semigroup homomorphism.

Now we interpret the polynomial $\cG_n(x)$ in terms of the semigroup homomorphism $\Gamma$. If $n = 0$, then Theorem \ref{main-theorem} follows immediately from Remarks \ref{G0-remark} and \ref{cj-remark}.

If $n > 0$, let $n = \sum_{i = 0}^rn_i q^{hi}$ be the $q^h$-adic expansion of $n$, where $0 \le n_i \le q^h - 1$ for each $0 \le i \le r$, and $n_r > 0$. Set
\begin{align*}
\bar{\alpha} = (n_0, n_1, \ldots, n_r) \in S_h^{\omega},
\end{align*}
and $\bar{\beta}_i = n_i \in S_h^{\omega}$ for each $0 \le i \le r$. We see that 
\begin{align}
\label{e11-main-t}
\bar{\alpha} = \bar{\beta}_0\star \bar{\beta}_1 \star \cdots \star \bar{\beta}_r.
\end{align}

On the other hand, (\ref{z-from-S-h-omega}) implies that $n = z_{\bar{\alpha}}(h)$, and $n_i = z_{\bar{\beta}_i}(h)$ for each $0 \le i \le r$. Thus
\begin{align}
\label{e12-main-t}
\cG_n(x) = \sum_{j = 0}^{q^h - 2} \epsilon_j(n)x^j = \sum_{j = 0}^{q^h - 2} \epsilon_j(z_{\bar{\alpha}}(h))x^j = \Gamma(\bar{\alpha}) \pmod{(x^{q^h - 1} - 1)},
\end{align}
and
\begin{align}
\label{e13-main-t}
\cG_{n_i}(x) = \sum_{\ell = 0}^{q^h - 2} \epsilon_{\ell}(n_i)x^{\ell} = \sum_{\ell = 0}^{q^h - 2} \epsilon_{\ell}(z_{\bar{\beta}_i}(h))x^{\ell} = \Gamma(\bar{\beta}_i) \pmod{(x^{q^h - 1} - 1)}.
\end{align}

Since $\Gamma$ is a semigroup homomorphism, we deduce from (\ref{e11-main-t}) that
\begin{align*}
\Gamma(\bar{\alpha}) = \Gamma(\bar{\beta}_0) \cdots \Gamma(\bar{\beta}_r) \pmod{(x^{q^h - 1} - 1)},
\end{align*}
and it thus follows from (\ref{e12-main-t}) and (\ref{e13-main-t}) that
\begin{align}
\label{e14-main-t}
\cG_n(x) \equiv \cG_{n_0}(x)\cG_{n_1}(x)\cdots \cG_{n_r}(x) \pmod{(x^{q^h - 1} - 1)}.
\end{align}

Note that $\{n_0, \ldots, n_r\}$ is the set of all digits appearing in the $q^h$-adic expansion of $n$. Hence it follows from (\ref{e14-main-t}) and the definition of $\fc_j(n)$ that 
\begin{align*}
\cG_n(x) \equiv \prod_{j = 0}^{q^h - 1}\cG_j(x)^{\fc_j(n)} \pmod{(x^{q^h - 1} - 1)},
\end{align*}
which proves our contention.

\end{proof}

\subsection{Examples}

Let $\wp$ be a prime of degree $h$ in $\bA$, and let $\fq$ be a primitive root modulo $\wp$. The multiplicative group $(\bA/ \wp \bA)^{\times}$ consists of all power $\fq^i$ for $0 \le i \le q^h - 2$. Take an arbitrary integer $n \ge 0$. Theorem \ref{main-theorem} can tell us exactly the number of integers $m$ with $0 \le m \le n$ for which the Carlitz binomial coefficients $\binom{n}{m}_C$ fall into each of the residue classes $\fq^0, \fq^1, \ldots, \fq^{q^h - 2} \pmod{\wp}$. For each $0 \le j \le q^h - 2$, the number of integers $m$ with $0 \le m \le n$ for which the Carlitz binomial coefficients $\binom{n}{m}_C$ fall into the residue classes $\fq^j \pmod{\wp}$, is exactly the coefficient $\epsilon_j(n)$ of the polynomial $\cG_n(x)$ in Theorem \ref{main-theorem}. We will show how to compute the polynomial $\cG_n(x) \in \bZ[x]$. 

Let $\cP(x) \in \bZ[x]$ be the polynomial defined by
\begin{align}
\label{P(x)-example}
\cP(x) = \prod_{j = 0}^{q^h - 1}\cG_j(x)^{\fc_j(n)},
\end{align}
where the integers $\fc_j(n)$ are defined as in Theorem \ref{main-theorem}. By the Euclidean algorithm, there exists a unique couple $(\cQ(x), \cR(x)) \in \bZ[x] \times \bZ[x]$ such that
\begin{align}
\label{e1-example}
\cP(x) = (x^{q^h - 1} - 1)\cQ(x) + \cR(x)
\end{align}
and either $\cR(x) = 0$ or $\deg(\cR(x)) \le q^h - 2$. We know that $\deg(\cG_n(x)) \le q^h - 2$, and it thus follows from Theorem \ref{main-theorem} and (\ref{e1-example}) that 
\begin{align*}
\cG_n(x) = \cR(x).
\end{align*}
Hence in order to compute $\cG_n(x)$ for any integer $n \ge 0$, it suffices to compute $\cP(x)$, which in turn is equivalent to computing only the polynomials $\cG_j(x)$ with $\fc_j(n) \ne 0$, and then find the remainder of the division $\cP(x)/(x^{q^h - 1} - 1)$.

For illustration, we now give one explicit example. We maintain the same notation as in Theorem \ref{main-theorem}.  

Let $q = 3$, and let $\bA = \bF_3[T]$. Let $\wp = T^2 + 1$ be a prime in $\bA$ of degree $2$. So $h = 2$. Let $\fq = T + 1 \in \bA$. Then $\fq$ is a primitive root modulo $\wp$. Note that $\bA/\wp \bA)^{\times}$ is of cardinality $8$. We have
\begin{align*}
\fq^0 \pmod{\wp} &= 1, \\
\fq^1 \pmod{\wp} &= T + 1,\\
\fq^2 \pmod{\wp} &= 2T,\\
\fq^3 \pmod{\wp} &= 2T + 1,\\
\fq^4 \pmod{\wp} &= 2,\\
\fq^5 \pmod{\wp} &= 2T + 2,\\
\fq^6 \pmod{\wp} &= T,\\
\fq^7 \pmod{\wp} &= T + 2.
\end{align*}

Let $n = 1811$. Then 
\begin{align*}
n = 1811 =  2 + 3 \cdot (3^2) + 4\cdot (3^{2 \cdot 2}) + 2 \cdot (3^{2\cdot 3})
\end{align*}
is the $3^2$-adic expansion of $n$. Hence $\fc_2(n) = 2, \fc_3(n) = 1, \fc_4(n) = 1$, and $\fc_j(n) = 0$ for all $j \in \{0, 1, 5, 6, 7, 8\}$. It suffices to compute $\cG_2(x), \cG_3(x), \cG_4(x)$. 

We see that
\begin{align*}
1!_C &= 2!_C = 1, \\
3!_C &= 4!_C = T^3 - T.
\end{align*}
Thus (\ref{eq-cG_n}) implies that
\begin{align*}
\cG_2(x) &= 3, \\
\cG_3(x) &= 2 + 2x^6, \\
\cG_4(x) &= 4 + x^6.
\end{align*}
By (\ref{P(x)-example}), we deduce that
\begin{align*}
\cP(x) = \cG_2(x)^{\fc_2(n)} \cG_3(x)^{\fc_3(n)} \cG_4(x)^{\fc_4(n)} = 3^2(2 + 2x^6)(4 + x^6).
\end{align*}

We can write $\cP(x)$ in the form
\begin{align*}
\cP(x) = (x^8 - 1)(18x^4) + 90x^6 + 18x^4 + 72,
\end{align*}
and thus 
\begin{align}
\label{Galois-polynomial}
\cG_n(x) = \cG_{1811}(x) = 72 + 18x^4 + 90x^6.
\end{align}
Therefore $\epsilon_0(1811) = 72, \epsilon_4(1811) = 18, \epsilon_6(1811) = 90$, and $\epsilon_j(1811) = 0$ for all $j \in \{1, 2, 3, 5, 7\}$.

The coefficients of $\cG_{1811}(x)$ can tell us about the distribution of the Carlitz binomial coefficients $\binom{1811}{m}_C$ modulo $\wp$. For example, $\epsilon_6(1811) = 90$ is the number of the Carlitz binomial coefficients $\binom{1811}{m}_C$ with $0 \le m \le 1811$ that fall into the residue class $\fq^6 \pmod{\wp}$, i.e., there are exactly 90 integers $m$ with $0 \le m \le 1811$ such that
\begin{align*}
 \binom{1811}{m}_C \equiv T \pmod{T^2 + 1}.
\end{align*}


\begin{thebibliography}{179}



\bibitem{Carlitz} {\sc L. Carlitz}, \emph{An analogue of the Staudt-Clausen theorem. }, Duke Math. J. {\bf7}, (1940). 62--67. 





\bibitem{Garfield-Wilf} {\sc R. Garfield and H. Wilf}, \emph{The distribution of the binomial coefficients modulo $p$}, J. Number Theory {\bf41} (1992), no.{\bf1}, 1--5. 



\bibitem{Goss} {\sc D. Goss}, \emph{Basic structures of function field arithmetic}, Ergebnisse der Mathematik und ihrer Grenzgebiete ({\bf3}) [Results in Mathematics and Related Areas ({\bf3})], {\bf35}, Springer-Verlag, Berlin, (1996).











\bibitem{Rosen} {\sc M. Rosen}, \emph{Number theory in function fields}, Graduate Texts in Mathematics, {\bf210}. Springer-Verlag, New York (2002).


\bibitem{Thakur} {\sc D.S. Thakur}, \emph{Function Field Arithmetic}, World Scientific Publishing Co., Inc., River Edge, NJ, (2004).



\bibitem{thakur} {\sc D.S. Thakur}, \emph{Binomial and factorial congruences for $\bF_q[t]$}, Finite Fields Appl. {\bf18} (2012), no.{\bf2}, 271--282. 


\bibitem{Weil} {\sc A. Weil}, \emph{Basic number theory}, Third edition. Die Grundlehren der Mathematischen Wissenschaften, Band {\bf144}. Springer--Verlag, New York--Berlin (1974).



\end{thebibliography}
\end{document}